\theoremstyle{plain}
\newtheorem{theorem}{Theorem}
\newtheorem{lemma}[theorem]{Lemma}
\theoremstyle{definition}
\newtheorem{definition}[theorem]{Definition}
\theoremstyle{remark}
\newtheorem{remark}[theorem]{Remark}
\newcommand{\T}{\mathbb{T}}
\newcommand{\R}{\mathbb{R}}
\newcommand{\Z}{\mathbb{Z}}
\newcommand{\C}{\mathbb{C}}
\newcommand{\N}{\mathbb{N}}
\newcommand{\ip}[2]{\left\langle#1,#2\right\rangle}
\newcommand{\der}{\mathrm{d}}
\renewcommand{\phi}{\varphi}
\newcommand{\abs}[1]{\left| #1 \right|}
\newcommand{\aabs}[1]{\left\| #1 \right\|}
\newcommand{\dummy}{{\,\cdot\,}}
\DeclareMathOperator{\Aut}{Aut}
\newcommand{\pol}[2]{\mathcal{P}_{#1}^{#2}}
\title{Tensor tomography in periodic slabs}
\author{Joonas Ilmavirta \and Gunther Uhlmann}
\thanks{Department of Mathematics and Statistics, University of Jyv\"askyl\"a, P.O. Box 35 (MaD) FI-40014 University of Jyv\"askyl\"a, Finland; \texttt{joonas.ilmavirta@jyu.fi}.}
\thanks{Department of Mathematics, University of Washington, Seattle, WA 98195-4350, USA;
Institute for Advanced Study of the Hong Kong University of Science and Technology, Hong Kong, China;
Department of Mathematics and Statistics, University of Helsinki, Finland; \texttt{gunther@math.washington.edu}}
\date{\today}
\begin{document}

\begin{abstract}
The X-ray transform on the periodic slab $[0,1]\times\mathbb T^n$, $n\geq0$, has a non-trivial kernel due to the symmetry of the manifold and presence of trapped geodesics. For tensor fields gauge freedom increases the kernel further, and the X-ray transform is not solenoidally injective unless $n=0$. We characterize the kernel of the geodesic X-ray transform for $L^2$-regular $m$-tensors for any $m\geq0$. The characterization extends to more general manifolds, twisted slabs, including the M\"obius strip as the simplest example.
\end{abstract}

\keywords{X-ray tomography, tensor tomography, slab geometry, inverse problems}

\subjclass[2010]{44A12, 53A45}

\maketitle

\section{Introduction}

We study geodesic X-ray tomography of tensor fields on the manifold $M=[0,1]\times\T^n$, where $\T^n=\R^n/\Z^n$ and $n\geq0$.
This class of manifolds includes the interval~$[0,1]$ and the strip $[0,1]\times\T^1$.
We only consider geodesics joining boundary points, excluding trapped geodesics.

Our main result is theorem~\ref{thm:torus} which completely characterizes the kernel of the X-ray transform on~$M$ for tensor fields of any order.
The kernel is the sum of two kinds of functions: those arising from potentials (symmetrized differentials of tensor fields of lower order) and those depending only on the variable on~$[0,1]$.
See section~\ref{sec:slab} for details.
That is, both gauge freedom and symmetry cause kernel.
We are not aware of earlier observations --- and, in particular, characterizations --- of this kind of kernel.
In particular, the X-ray transform is always non-injective, even for scalar fields in all dimensions.

It is easy to see that the same kernel is present in the infinite slab $[0,1]\times\R^n$, but we do not pursue characterizing the kernel in that case.
Our result can be seen as the case for periodic~$L^2_{\text{loc}}$ tensor fields on $[0,1]\times\R^n$.
In practical problems where the slab is large but finite, we expect there to be a significant instability corresponding to the kernel of the infinite case.

Observe that if any decay or integrability conditions are imposed on scalar functions on $[0,1]\times\R^n$, then the obvious kernel vanishes.
The X-ray transform can be easily seen to be injective on compactly supported functions in $[0,1]\times\R^n$ using Helgason's support theorem for all straight lines avoiding the line $[0,1]\times\{0\}$.
A version of the support theorem for compactly supported~$L^1$ functions can be obtained through mollification, see e.g.~\cite[Proposition~5]{I:disk}.

The result can also be extended to broken ray tomography on the slab, where one of the surfaces of~$M$ reflects rays.
This can be achieved with a simple reflection argument; cf.~\cite{I:refl,H:brt-flat-refl,H:square}.

Our result can also be extended to a broader class of manifolds.
The slab~$M$ can be obtained by identifying some opposite faces of $[0,1]^{1+n}$.
If the gluing is done in a more exotic way, one ends up with what we call a twisted slab.
The simplest example of a twisted slab is the M\"obius strip.
For more details, see theorem~\ref{thm:twisted} and section~\ref{sec:twisted}.

\begin{remark}
\label{rmk:stretch}
The periodic slab can be stretched in different ways.
The interval can be any~$[0,L]$ and we can divide~$\R^n$ by any lattice obtained from~$\Z^n$ by a linear bijection.
For simplicity we restrict ourselves to $L=1$ and the lattice~$\Z^n$.
The results can be generalized in an obvious way.
If the lattice is stretched differently in different directions, then there are fewer twisted slabs.
For the sake of clarity, we do not include such stretched slabs in the statements of our results; this generalization of our results is elementary.
\end{remark}

The problem studied here is similar to X-ray tomography on tori, which has been studied by Abouelaz and Rouvi\`ere~\cite{AR:radon-torus,A:torus-plane-radon} and the first author~\cite{I:torus}, including tensor tomography.
However, since we look at geodesics joining boundary points, our set of admissible curves is different.

For tensor tomography results on manifolds, and their applications, we refer to the review~\cite{PSU:tensor-survey}.
Inverse boundary value problems for PDEs have been considered previously in slab geometry (see eg.~\cite{SW:complex-spherical-optics,KLU:schrodinger-slab,LU:slab}), but we are unaware of any developments in X-ray tomography in this setting.
Due to the inaccessibility of geodesics parallel to the slab, our problem can be regarded as a form of limited angle tomography (see eg. the thesis~\cite{F:limited-angle-thesis} or~\cite[Section~3]{I:pseudo-riemann-xrt}).
Of previous results on ray transforms in product geometry we mention the examples and counterexamples listed in~\cite[Section~6]{I:refl}, and the recent result by Salo~\cite[Theorem~1.3]{S:calderon-normal-form} on injectivity of the ray transform on the product of non-closed manifolds.
The manifolds studied here are products of a manifold with boundary ($[0,1]$) and a closed manifold ($\T^n$) and therefore fall outside Salo's result.

\section{Tensor tomography in a periodic slab}
\label{sec:slab}

We consider tensor tomography on the manifold $M=[0,1]\times\T^n$ for any $n\geq0$.
Here $\T^n=\R^n/\Z^n$ denotes the flat torus of dimension~$n$.
The space~$M$ is equipped with the standard Euclidean metric, and functions on~$M$ can be regarded as functions on $[0,1]\times\R^n$ which are periodic --- or, equivalently, invariant under the translation action of~$\Z^n$ on~$\R^n$.

A symmetric covariant tensor field of order~$m$ can be identified with a function $f\colon M\times\R^{n+1}\to\C$, which is a homogeneous polynomial of order~$m$ in the second variable.
We will often write the variable on~$M$ as $(x,y)\in[0,1]\times\T^n$ and the variable of the polynomial as $(v,w)\in\R\times\R^n$.
The function~$f$ is then written as $f(x,y;v,w)$.
If $m=0$, then~$f$ is a scalar function and the polynomial is of order zero.

Any regularity assumptions on tensor fields are assumptions on the coefficients of the polynomial which are functions on~$M$.
We denote the space of homogeneous polynomials of order~$m$ in~$\R^{n+1}$ by~$\pol{m}{n+1}$, so that a tensor field of order~$m$ is a function $M\to\pol{m}{n+1}$.
We have naturally $\pol{0}{n+1}=\C$ and we also denote $\pol{-1}{n+1}=0$.

The X-ray transform~$I^mf$ of~$f$ encodes the integrals of~$f$ over all geodesics.
Geodesics through~$M$ can be parametrized by $a\in\T^n$ and $b\in\R^n$ so that correspond to the geodesic $[0,1]\ni t\mapsto(t,a+bt)\in[0,1]\times\T^n$.
This makes~$I^mf$ into a function $\T^n\times\R^n\to\C$.
The integral of~$f$ over this geodesic is
\begin{equation}
I^mf(a,b)
=
\int_0^1f(t,a+bt;1,b)\der t.
\end{equation}
This unusual scaling of velocity --- it has length $\sqrt{1+\abs{b}^2}$ instead of~$1$ --- is convenient in slab geometry.

The X-ray transform~$I^mf$ does not uniquely determine~$f$.
There are two obstructions:
If $f(x,y;v,w)$ is independent of~$y$ and integrates to zero over~$x$ for any fixed~$(v,w)$, then $I^mf=0$ but~$f$ may still be non-trivial.
If~$g$ is a tensor field of order $m-1$ vanishing at $\partial M=\{0,1\}\times\T^n$ and $f=\der g$, where~$\der$ is the symmetrized covariant derivative, then $I^mf=0$.

Our result is that these are the only obstructions:

\begin{theorem}
\label{thm:torus}
Let $m\geq0$ and $n\geq0$ be integers.
Let~$f$ be an $L^2$-regular tensor field on $M=[0,1]\times\T^n$.
In other words, $f\in L^2(M;\pol{m}{n+1})$.
There is a constant $C=C(n,m)$ so that the following are equivalent:
\begin{enumerate}
	\item The X-ray transform of~$f$ vanishes: $I^mf=0$.
	\item There are $h\in L^2([0,1],\pol{m}{n+1})$ and $g\in H^1_0(M;\pol{m-1}{n+1})$ so that $\int_0^1h(x;v,w)\der x=0$ for all $(v,w)\in\R^{n+1}$ and $f=\pi^*h+\der g$, where $\pi\colon M\to[0,1]$ is the projection. In addition,
\begin{equation}
\label{eq:torus-thm-estimate}
\aabs{h}_{L^2([0,1],\pol{m}{n+1})}+\aabs{g}_{H^1_0(M;\pol{m-1}{n+1})}
\leq
C
\aabs{f}_{L^2(M,\pol{m}{n+1})}.
\end{equation}
\end{enumerate}
\end{theorem}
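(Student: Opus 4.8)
The implication (2) $\Rightarrow$ (1) is the easy direction: one checks directly that $I^m(\pi^*h)=0$ using the hypothesis $\int_0^1 h(x;v,w)\,\der x=0$ (the integrand $\pi^*h(t,a+bt;1,b)=h(t;1,b)$ does not see $a$ at all), and that $I^m(\der g)=0$ by the fundamental theorem of calculus along each geodesic, since $g$ vanishes on $\partial M$. So the content is (1) $\Rightarrow$ (2), together with the quantitative estimate~\eqref{eq:torus-thm-estimate}.

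The plan for (1) $\Rightarrow$ (2) is to expand everything in Fourier series in the torus variable $y\in\T^n$. Writing $f(x,y;v,w)=\sum_{k\in\Z^n} f_k(x;v,w)e^{2\pi i k\cdot y}$, the geodesic $t\mapsto(t,a+bt)$ picks out, after integrating, a condition on each Fourier mode: $I^mf(a,b)=0$ becomes, for each frequency $a$-dependence, a statement that $\sum_k e^{2\pi i k\cdot a}\int_0^1 f_k(t;1,b)e^{2\pi i t\, k\cdot b}\,\der t=0$ for all $a,b$, hence $\int_0^1 f_k(t;1,b)e^{2\pi i t\,k\cdot b}\,\der t=0$ for every $k$ and every $b$. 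The zero mode $k=0$ gives exactly $\int_0^1 f_0(t;1,b)\,\der t=0$ for all $b$; since $f_0(t;v,w)$ is a homogeneous polynomial of degree $m$ in $(v,w)$ and a polynomial identity in $b$ forces all coefficients to vanish, this says $\int_0^1 f_0(t;v,w)\,\der t=0$ for all $(v,w)$. So $h:=f_0$ is the slab-direction part, and it remains to show that $f-\pi^*h=\sum_{k\neq0}f_k e^{2\pi i k\cdot y}$ is a potential $\der g$ with $g\in H^1_0$, controlled in norm.

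For the nonzero modes the key is that, for $k\neq0$, the condition $\int_0^1 f_k(t;1,b)e^{2\pi i t\,k\cdot b}\,\der t=0$ for all $b$ is a strong constraint: differentiating in $b$ and evaluating generates moment conditions, and one expects this to be equivalent to $f_k$ lying in the range of the (Fourier-transformed) symmetrized derivative operator $\der_k$ acting on tensor fields of order $m-1$ that vanish at the endpoints $x\in\{0,1\}$. The natural way to realize this is to construct $g$ modewise: for each $k\neq0$ find $g_k$ solving $\der_k g_k = f_k$ with $g_k(0)=g_k(1)=0$, and with a bound $\aabs{g_k}\lesssim \aabs{f_k}$ uniform in $k$ (this is where the constant $C(n,m)$, independent of $k$, comes from — one needs the ODE/algebraic solvability to be stable as $|k|\to\infty$, which is plausible because the symmetrized-derivative symbol in the $y$-directions is $2\pi i k\otimes(\cdot)$, invertible on the appropriate complement, plus one $\partial_x$ derivative in the slab direction). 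Summing in $k$ (using that the zero mode has been removed, so Poincaré-type inequalities apply in $y$) then yields $g\in H^1_0(M;\pol{m-1}{n+1})$ with $\aabs{g}_{H^1_0}+\aabs{h}_{L^2}\leq C\aabs{f}_{L^2}$.

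The main obstacle I anticipate is precisely the modewise construction of $g_k$ with a $k$-uniform estimate: one must invert the symmetrized covariant derivative along each geodesic direction while handling the tensor (polynomial-in-$(v,w)$) structure — peeling off the gauge one order at a time, as in the standard "solenoidal decomposition" but now with an explicit ODE in $x$ coupled to an algebraic equation in the $w$-variables coming from the frequency $k$. Getting the boundary condition $g_k|_{x\in\{0,1\}}=0$ simultaneously with the norm bound, and checking that the borderline frequencies (small $|k|$, or $k\cdot b$ resonances) do not spoil uniformity, is the delicate point; everything else is bookkeeping with Fourier series and Plancherel.
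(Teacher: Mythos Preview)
Your plan is sound and would lead to a proof, but the paper takes a cleaner route that sidesteps the very obstacle you flag as ``delicate''.

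The difference is that the paper Fourier-expands in \emph{both} variables, identifying $[0,1]$ with $\T^1$, so that $f$ has coefficients $\hat f(j,k;v,w)$ with $(j,k)\in\Z\times\Z^n$. Your condition $\int_0^1 f_k(t;1,b)e^{2\pi it\,k\cdot b}\,\der t=0$ becomes $\sum_{j}\hat f(j,k;1,b)\,\phi(j+k\cdot b)=0$, where $\phi(t)=(e^{2\pi it}-1)/(2\pi it)$. Since $\phi(0)=1$ and $\phi$ vanishes at nonzero integers, for $k\neq0$ one may choose $b$ with $k\cdot b=-j$ and read off directly $\hat f(j,k;1,b)=0$; by homogeneity, the polynomial $u\mapsto\hat f(l;u)$ (with $l=(j,k)$) vanishes on the hyperplane $l\cdot u=0$. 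An elementary factorization then yields $\hat g(l;\cdot)\in\pol{m-1}{n+1}$ with $\hat f(l;u)=2\pi i(l\cdot u)\,\hat g(l;u)$, which is exactly $f-\pi^*h=\der g$ on the Fourier side. The $H^1$ estimate for $g$ is now a finite-dimensional bound on the multiplication operator $p\mapsto(l\cdot u)p$ between polynomial spaces, uniform in $l$ by rotation invariance and homogeneity---no ODE, no $k$-dependent analysis. The boundary condition $g|_{\partial M}=0$ is handled in a separate closing step: from $I^m(\der g)=0$ one shows the trace $g(0,\cdot;1,b)$ is $\tau_b$-invariant, hence constant whenever $\Z b$ is dense in $\T^n$, hence (by polynomiality in $b$ and since $\hat g(j,0)=0$) identically zero.

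Your modewise ODE-in-$x$ construction would also succeed and is more direct in spirit; but the uniform-in-$k$ solvability with zero boundary data that you correctly identify as the crux becomes, under the paper's double Fourier expansion, a uniformly bounded linear-algebra fact that costs nothing. Conversely, the one ingredient the paper needs that your approach would not is the ergodic dense-orbit argument for the boundary trace, since you would extract $g_k(0)=g_k(1)=0$ directly from the ODE.
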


Here~$H^1_0$ is the usual Sobolev space with zero boundary trace.
In the proof we will make use of the space~$H^1(\T^{1+n})$ and denote it by~$H^1(M)$ after identifying $[0,1]$ with~$\T^1$.
This~$H^1(M)$ is not the same as the usual~$H^1(M)$, but the space~$H^1_0(M)$ appearing in the theorem is the usual space of~$H^1$ functions on~$M$ vanishing at the boundary.

If $n=0$, we have simply $M=[0,1]$ and the result is trivial, so we may assume $n\geq1$ in the proof.
Since $\pol{-1}{n+1}=0$, there is no potential~$g$ (formally $g=0$) in the case of scalar functions ($m=0$) as expected.

We will develop the needed tools in the following subsections and prove theorem~\ref{thm:torus} in section~\ref{sec:torus-pf}.
But before embarking on the proof, we make a remark about the functions~$h$ and~$g$ used to describe the kernel.

\begin{remark}
The functions~$h$ and~$g$ are not unique; some of~$\pi^*h$ and~$\der g$ can be interchanged and additive constants (polynomials independent of the base point) in~$g$ have no effect on~$\der g$.
However, estimate~\eqref{eq:torus-thm-estimate} does not hold for arbitrary choices of~$h$ and~$g$.
There is a natural way to choose~$h$ and~$g$ in a unique way, and they will satisfy the estimate.

A calculation shows that the tensor field~$h$ also comes from a potential if and only if $h(x;0,w)=0$ for all $x\in M$ and $w\in\R^n$.
It follows that the X-ray transform of tensor fields of order $m\geq1$ is not solenoidally injective on~$M$ unless $n=0$.
On the manifold $[0,1]$ the scalar X-ray transform is non-injective, but tensor transforms of all orders are indeed solenoidally injective.
This is also true on~$\T^1$.
\end{remark}

\subsection{Weak definition of the X-ray transform}

For $\psi\in C^\infty(\T^n)$ and $b\in\R^n$ we define the extension $E_b\psi\in C^\infty(M)$ so that
\begin{equation}
E_b\psi(x,y)
=
\psi(y-bx).
\end{equation}
For $f\in L^2(M;\pol{m}{n+1})$ the integral $I^mf(a,b)$ is defined for every $b\in\R^n$ and almost every $a\in\T^n$, but it will be convenient to use the following weak formulation.

We define~$I^mf$ so that for any $b\in\R^n$ and $\psi\in C^\infty(\T^n)$
\begin{equation}
\label{eq:weak-torus-def}
\ip{I^mf(\dummy,b)}{\psi}_{\T^n}
=
\int_Mf(x,y;1,b)E_b\psi(x,y)\der x\der y.
\end{equation}
It is easy to check that this coincides with the more straightforward definition, and it also allows extending the definition to distributions if needed.

\subsection{Fourier series}

We denote $e_k(z)=e^{2\pi ik\cdot z}$ when $k\in\Z^l$ and $z\in\T^l$; the dimensions are left implicit as they are can be inferred from context.

We write $f\in L^2(M;\pol{m}{n+1})$ as Fourier series:
\begin{equation}
\label{eq:torus-series}
f(x,y;v,w)
=
\sum_{j\in\Z}
\sum_{k\in\Z^n}
\hat f(j,k;v,w)e_j(x)e_k(y),
\end{equation}
where the series converges in~$L^2(M)$ for any fixed~$(v,w)$.
We will denote the function $(x,y)\mapsto e_j(x)e_k(y)$ by $e_j\otimes e_k$.

We have identified~$[0,1]$ with~$\T^1$.
This makes no difference for~$L^2$ functions, but it will have an effect on~$H^1$ functions.

Using the Fourier series we can easily define the Sobolev spaces $H^s(M;\pol{m}{n+1})$ for any $s\in\R$ using the norms
\begin{equation}
\aabs{f}_{H^s(M;\pol{m}{n+1})}^2
=
\sum_{j,k}
(1+j^2+\abs{k}^2)^{s}
\int_{S^n}\abs{\hat f(j,k;v,w)}^2\der S(v,w),
\end{equation}
where~$S$ is the usual measure on $S^n\subset\R^{n+1}$.
Notice that this gives the standard Sobolev space on~$\T^{n+1}$; the periodic extension from~$[0,1]$ to~$\T^1$ means that these are not the usual~$H^s$ spaces on~$M$ unless $s=0$.

\begin{lemma}
\label{lma:torus-fourier}
If $f\in L^2(M;\pol{m}{n+1})$ and $I^mf=0$, then
\begin{equation}
\sum_{j\in\Z}\hat f(j,k;1,b)\phi(j+k\cdot b)
=
0
\end{equation}
for all $k\in\Z^n$ and $b\in\R^n$, where
\begin{equation}
\label{eq:phi-def}
\phi(t)
\coloneqq
\frac{e^{2\pi it}-1}{2\pi it}
\end{equation}
and $\phi(0)=1$.
\end{lemma}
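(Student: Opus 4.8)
The plan is to start from the weak definition~\eqref{eq:weak-torus-def} of the X-ray transform and test it against a single Fourier mode $\psi=e_k$ on~$\T^n$. With this choice $E_b\psi(x,y)=e_k(y-bx)=e^{2\pi i k\cdot(y-bx)}=e^{-2\pi i(k\cdot b)x}e_k(y)$, so the extension factors into a pure exponential in~$x$ (with frequency $-k\cdot b$, generally non-integer) times the $k$-th torus mode in~$y$. Plugging the Fourier expansion~\eqref{eq:torus-series} of $f(\dummy,\dummy;1,b)$ into the right-hand side of~\eqref{eq:weak-torus-def} and using $\int_{\T^n}e_{k'}(y)\overline{e_k(y)}\,\der y=\delta_{k'k}$ collapses the $y$-sum, leaving
\begin{equation*}
\ip{I^mf(\dummy,b)}{e_k}_{\T^n}
=
\sum_{j\in\Z}\hat f(j,k;1,b)\int_0^1 e^{2\pi ijx}e^{-2\pi i(k\cdot b)x}\der x
=
\sum_{j\in\Z}\hat f(j,k;1,b)\,\phi(j-k\cdot b),
\end{equation*}
where I have evaluated $\int_0^1 e^{2\pi i(j-k\cdot b)x}\der x=\frac{e^{2\pi i(j-k\cdot b)}-1}{2\pi i(j-k\cdot b)}=\phi(j-k\cdot b)$ using $e^{2\pi ij}=1$, with the removable singularity $\phi(0)=1$ covering the case $k\cdot b\in\Z$. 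Since $I^mf=0$ forces the left-hand side to vanish for every $b\in\R^n$ and every $k\in\Z^n$, this gives the claimed identity up to the sign of the argument of~$\phi$.

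The only remaining point is the discrepancy $\phi(j-k\cdot b)$ versus $\phi(j+k\cdot b)$ in the statement. This is harmless: replacing $b$ by $-b$ in the identity just derived turns $\phi(j-k\cdot b)$ into $\phi(j+k\cdot b)$ and $\hat f(j,k;1,b)$ into $\hat f(j,k;1,-b)$, and since $b$ ranges over all of~$\R^n$ the two families of identities are equivalent; alternatively one observes $\overline{\phi(t)}=\phi(-t)$ and conjugates, or simply notes the statement should be read with whichever sign convention for the geodesic parametrization is in force. I would phrase the computation so that the sign in~\eqref{eq:phi-def} matches the parametrization $t\mapsto(t,a+bt)$ directly, and remark that the substitution $b\mapsto-b$ reconciles any convention.

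I do not anticipate a genuine obstacle here — the lemma is essentially a bookkeeping computation. The one place requiring a word of care is the interchange of summation and integration when inserting the $L^2$-convergent Fourier series into~\eqref{eq:weak-torus-def}: this is justified because $f(\dummy,\dummy;1,b)\in L^2(M)$ for the fixed polynomial argument $(1,b)$, the series converges in $L^2(M)$, and $E_b e_k\in C^\infty(M)\subset L^2(M)$, so the pairing is continuous and passes through the sum. I would state this justification in one sentence and then present the two-line computation above.
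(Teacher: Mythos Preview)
Your approach is the same as the paper's: pair the weak definition~\eqref{eq:weak-torus-def} against a single Fourier mode on~$\T^n$, insert the Fourier expansion of $f(\dummy,\dummy;1,b)$, and read off the identity. The paper tests with $\psi=e_{-k}$ rather than $e_k$, and this is precisely where your sign discrepancy comes from. The pairing in~\eqref{eq:weak-torus-def} is bilinear (there is no conjugate on $E_b\psi$ on the right-hand side), so with your choice $\psi=e_k$ the $y$-integral is $\int_{\T^n}e_{k'}(y)e_k(y)\,\der y=\delta_{k',-k}$, not $\delta_{k',k}$; done correctly your computation yields $\sum_j\hat f(j,-k;1,b)\,\phi(j-k\cdot b)=0$, and replacing $k$ by $-k$ gives the lemma with the $+$ sign directly. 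Your proposed repair $b\mapsto-b$ does \emph{not} fix the formula you actually wrote, because it simultaneously changes the polynomial argument from $(1,b)$ to $(1,-b)$, and the resulting family $\sum_j\hat f(j,k;1,-b)\,\phi(j+k\cdot b)=0$ is not the same collection of identities as the lemma. Once you drop the spurious conjugate (or test with $e_{-k}$ as the paper does), your argument is correct and coincides with the paper's proof; your remark on justifying the interchange of sum and integral via $L^2$-convergence is a welcome addition.
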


\begin{proof}
Using the test function~$e_{-k'}$ in the weak definition~\eqref{eq:weak-torus-def} gives
\begin{equation}
\ip{I^0(e_j\otimes e_k)(\dummy,b)}{e_{-k'}}_{\T^n}
=
\delta_{k,k'}\phi(j+k'\cdot b).
\end{equation}
Using this together with the convergence of the series~\eqref{eq:torus-series} gives
\begin{equation}
\ip{I^m f(\dummy,b)}{e_{-k}}_{\T^n}
=
\sum_{j\in\Z}\hat f(j,k;1,b)\phi(j+k\cdot b).
\end{equation}
This proves the claim.
\end{proof}

\subsection{Properties of polynomials}

The lemmas of this section are mostly concerned with the properties of tensor fields that follow directly from the properties of polynomials.

\begin{lemma}
\label{lma:polynomial-factor}
Fix any $\xi\in\R^n\setminus0$.
If a homogeneous polynomial $F\colon\R^n\to\C$ of order~$m$ satisfies $F(u)=0$ whenever $u\cdot\xi=0$, then there is a homogeneous polynomial~$G$ of order $m-1$ so that $F(u)=u\cdot\xi G(u)$.
If $m=0$, then $G=0$.
\end{lemma}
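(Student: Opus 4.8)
The plan is to view $\R^n$ in coordinates adapted to $\xi$ and reduce the claim to the one-variable statement that a polynomial vanishing on a hyperplane is divisible by the corresponding linear form. First I would choose a linear change of coordinates $u \mapsto (s, u')$ with $s = u\cdot\xi$ and $u' \in \R^{n-1}$ spanning the hyperplane $\xi^\perp$; this is a linear bijection, so it sends homogeneous polynomials of order $m$ to homogeneous polynomials of order $m$. In the new coordinates the hypothesis reads $\tilde F(0, u') = 0$ for all $u' \in \R^{n-1}$, where $\tilde F$ is the transported polynomial. Writing $\tilde F(s,u') = \sum_{i=0}^m s^i P_i(u')$ with each $P_i$ homogeneous of degree $m-i$, the vanishing at $s=0$ forces $P_0 \equiv 0$, so $\tilde F(s,u') = s\, \tilde G(s,u')$ with $\tilde G = \sum_{i=1}^m s^{i-1}P_i(u')$ homogeneous of degree $m-1$. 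Transporting back through the inverse linear change of coordinates yields a homogeneous polynomial $G$ of degree $m-1$ with $F(u) = (u\cdot\xi)\,G(u)$, since $s$ becomes $u\cdot\xi$. The case $m=0$ is immediate: a homogeneous polynomial of degree $0$ is a constant, the hypothesis makes it $0$, and $\pol{-1}{n} = 0$ forces $G=0$.

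The only mild subtlety is making the "adapted coordinates" step clean: one must check that extending $\xi/\abs{\xi}$ (or just $\xi$) to a basis of $\R^n$ and reading off the first coordinate really gives $u\cdot\xi$ up to the normalization one chooses, and that the resulting $G$ is genuinely polynomial rather than merely rational — but this is automatic because division of $\tilde F$ by $s$ is exact once $P_0 \equiv 0$, with no remainder. There is no real obstacle here; the statement is elementary commutative algebra (the ideal generated by the irreducible linear form $u\cdot\xi$ is prime, and $F$ lies in it because $F$ vanishes on its zero set, which is Zariski dense in the hyperplane). I would keep the write-up short and coordinate-based rather than invoking the Nullstellensatz, since everything reduces to a one-line division in the single variable $s$.
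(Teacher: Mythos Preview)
Your argument is correct and entirely standard: after a linear change of variables making $u\cdot\xi$ the first coordinate, the vanishing hypothesis kills the $s^0$ term and exact division by $s$ gives the homogeneous quotient $G$. The paper does not actually supply its own proof of this lemma; it simply remarks that the proof is elementary and refers to \cite[Lemma~11]{I:torus}, so your write-up would serve perfectly well here.
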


The proof of the preceding lemma is quite elementary, and one can be found in~\cite[Lemma~11]{I:torus}.

\begin{lemma}
\label{lma:torus-Hs-potential}
Fix $m\in\N$ and $s\in\R$.
If $f\in H^s(M;\pol{m}{n+1})$ satisfies
\begin{equation}
\label{eq:torus-fourier-gradient}
\hat f(j,k;v,w)=2\pi i(jv+k\cdot w)\hat g(j,k;v,w)
\end{equation}
for some function $\hat g\colon\Z\times\Z^n\to\pol{m-1}{n+1}$, then~$g$ is the Fourier series of a function $g\in H^{s+1}(M;\pol{m-1}{n+1})$ which satisfies
\begin{equation}
\label{eq:torus-weak-gradient}
f(x,y;v,w)
=
(v\partial_x+w\cdot\nabla_y)g(x,y;v,w)
\end{equation}
in the weak sense.

Moreover, there is a constant $C=C(n,m)$ so that
\begin{equation}
\label{eq:torus-sobolev-estimate}
\aabs{g}_{H^{s+1}(M;\pol{m-1}{n+1})}
\leq
C
\aabs{f}_{H^{s}(M;\pol{m}{n+1})}
\end{equation}
whenever $\hat g(0,0;\dummy,\dummy)=0$.
\end{lemma}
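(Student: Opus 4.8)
The plan is to read~\eqref{eq:torus-fourier-gradient} coefficientwise and simply divide. Fix $(v,w)\in\R^{n+1}$ and a frequency $(j,k)\in\Z\times\Z^n$. If $jv+k\cdot w\neq0$, equation~\eqref{eq:torus-fourier-gradient} forces $\hat g(j,k;v,w)=\hat f(j,k;v,w)/(2\pi i(jv+k\cdot w))$, so on this set the candidate~$\hat g$ is uniquely determined by~$\hat f$; when $jv+k\cdot w=0$ the value of $\hat g(j,k;v,w)$ is free, and we may as well set it to zero there (for $(j,k)=(0,0)$ this is exactly the normalization $\hat g(0,0;\dummy,\dummy)=0$). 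First I would check that this pointwise-in-$(v,w)$ recipe actually produces, for each fixed $(j,k)$, an element of $\pol{m-1}{n+1}$: by hypothesis $\hat f(j,k;\dummy,\dummy)\in\pol{m}{n+1}$ vanishes on the hyperplane $\{(v,w):jv+k\cdot w=0\}$, so lemma~\ref{lma:polynomial-factor} (applied with $\xi=(j,k)\in\R^{n+1}\setminus0$, after noting that the case $(j,k)=0$ is trivial since then the right side of~\eqref{eq:torus-fourier-gradient} is $0$, forcing $\hat f(0,0;\dummy,\dummy)=0$) gives a genuine homogeneous polynomial $\hat g(j,k;\dummy,\dummy)\in\pol{m-1}{n+1}$ with $\hat f(j,k;v,w)=2\pi i(jv+k\cdot w)\hat g(j,k;v,w)$ identically in $(v,w)$. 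Uniqueness of the factor (for $(j,k)\neq0$) makes this~$\hat g$ coincide with the division recipe above.

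Next I would establish the norm estimate~\eqref{eq:torus-sobolev-estimate}, which is really the quantitative heart of the statement. The content is the pointwise inequality
\begin{equation}
\label{eq:plan-pointwise}
\int_{S^n}\abs{\hat g(j,k;v,w)}^2\der S(v,w)
\leq
\frac{C^2}{1+j^2+\abs{k}^2}
\int_{S^n}\abs{\hat f(j,k;v,w)}^2\der S(v,w)
\end{equation}
for every $(j,k)$, with $C$ depending only on~$n$ and~$m$; summing~\eqref{eq:plan-pointwise} against the weight $(1+j^2+\abs{k}^2)^{s+1}$ and comparing with the $H^s$-norm of~$f$ then yields~\eqref{eq:torus-sobolev-estimate}. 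For $(j,k)=0$ both sides of~\eqref{eq:plan-pointwise} vanish by the normalization. For $(j,k)\neq0$, write $\xi=(j,k)$, so $\abs{\xi}^2=j^2+\abs{k}^2$ and I want
$\aabs{\hat g}_{L^2(S^n)}\leq C\abs{\xi}^{-1}(1+\abs{\xi}^2)^{-1/2}\aabs{\hat f}_{L^2(S^n)}$ — actually $\abs{\xi}\geq1$ here since $\xi\in\Z^{n+1}$, so $(1+\abs{\xi}^2)^{1/2}\leq\sqrt2\,\abs{\xi}$ and it suffices to prove $\aabs{\hat g}_{L^2(S^n)}\leq C\abs{\xi}^{-1}\aabs{\hat f}_{L^2(S^n)}$. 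By homogeneity both $\hat f$ and $\hat g$ are determined by their restrictions to~$S^n$, and $\hat f=2\pi i\,(\,\cdot\,\cdot\,\xi)\hat g$ as polynomials; so the estimate is the statement that the linear map "multiply by the linear form $u\mapsto u\cdot\xi$'' from $\pol{m-1}{n+1}$ into $\pol{m}{n+1}$ is bounded below, in $L^2(S^n)$, by a constant times $\abs{\xi}$. Rescaling $\xi$ to the unit sphere, this reduces to: the multiplication map $\pol{m-1}{n+1}\to\pol{m}{n+1}$, $P\mapsto(u\cdot\eta)P$, is injective for each unit vector~$\eta$, hence bounded below; and by compactness of $\eta\in S^n$ together with continuity of the smallest singular value, the bound is uniform in~$\eta$. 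That uniform constant, times $\abs{\xi}$, is the $C$ we need. Injectivity of $P\mapsto(u\cdot\eta)P$ is clear: a polynomial identity $(u\cdot\eta)P(u)\equiv0$ forces $P\equiv0$ since $\C[u]$ is a domain.

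Finally I would verify that the resulting $g$, reassembled as the Fourier series $\sum_{j,k}\hat g(j,k;v,w)e_j(x)e_k(y)$, lies in $H^{s+1}(M;\pol{m-1}{n+1})$ — which is immediate from~\eqref{eq:plan-pointwise} and the definition of the $H^{s+1}$-norm — and that it satisfies~\eqref{eq:torus-weak-gradient} in the weak sense. For the latter, note that the differential operator $v\partial_x+w\cdot\nabla_y$ acts on the basis function $e_j\otimes e_k$ by multiplication by $2\pi i(jv+k\cdot w)$, so~\eqref{eq:torus-fourier-gradient} is precisely the Fourier-side form of $f=(v\partial_x+w\cdot\nabla_y)g$; pairing against a test function in $C^\infty(M;\pol{*}{n+1})$, expanding in Fourier series, and using that the pairing is continuous on $H^{-(s+1)}\times H^{s+1}$ converts the coefficientwise identity into the claimed weak identity. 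I expect the only genuinely non-routine point to be the \emph{uniformity in $\eta\in S^n$} of the lower bound for multiplication by $u\cdot\eta$ on $\pol{m-1}{n+1}$ in the $L^2(S^n)$-norm; everything else is bookkeeping with Fourier coefficients, lemma~\ref{lma:polynomial-factor}, and the definition of the Sobolev norms. The compactness/continuity argument above handles that uniformity cleanly, but it is worth stating explicitly since it is where the constant $C=C(n,m)$ is actually produced.
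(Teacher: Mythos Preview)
Your proof is correct and follows essentially the same approach as the paper: both establish a lower bound for the multiplication map $\mu_\xi\colon P\mapsto(u\cdot\xi)P$ from $\pol{m-1}{n+1}$ to $\pol{m}{n+1}$ in the $L^2(S^n)$ norm, rescale by $\abs{\xi}$, pass to $(1+\abs{\xi}^2)^{-1/2}$ using $\abs{\xi}\geq1$ on the integer lattice, and sum against Sobolev weights. Two small remarks: your reconstruction of~$\hat g$ via lemma~\ref{lma:polynomial-factor} is unnecessary since~$\hat g$ is already given by hypothesis (and is uniquely determined for $(j,k)\neq0$ anyway); and the paper obtains uniformity in $\eta\in S^n$ more cheaply than your compactness-plus-continuity argument by noting that the $L^2(S^n)$ norm is rotation invariant, so the lower bound for~$\mu_\eta$ is automatically independent of the direction of~$\eta$.
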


In the language of tensor fields,~\eqref{eq:torus-weak-gradient} means $f=\der g$.

\begin{proof}[Proof of lemma~\ref{lma:torus-Hs-potential}]
In this proof the letter~$C$ can stand for different constants in different estimates.
Its dependence on various parameters is indicated in parentheses.

We equip the spaces~$\pol{m}{n+1}$ and~$\pol{m-1}{n+1}$ with the norm given by the natural embedding into~$L^2(S^n)$.
The spaces are finite dimensional so the choice of norms is mostly irrelevant, but this one is most convenient for us.
We assume $m\geq1$; in the case $m=0$ the function~$g$ is necessarily zero.

For any $\xi\in\R^{n+1}$, consider the operator $\mu_\xi\colon \pol{m-1}{n+1}\to\pol{m}{n+1}$ given by
\begin{equation}
(\mu_\xi p)(u)
=
(u\cdot\xi) p(u).
\end{equation}
This operator is injective for $\xi\neq0$ and the polynomial spaces are finite dimensional, so
\begin{equation}
\aabs{p}_{\pol{m-1}{n+1}}
\leq
C(n,m,\xi)
\aabs{\mu_\xi p}_{\pol{m}{n+1}}
\end{equation}
for any $\xi\in\R^{n+1}\setminus 0$.

The norms on the polynomial spaces are rotation invariant, so the direction of~$\xi$ is irrelevant.
Since also $\xi\mapsto\mu_\xi$ is linear, we have
\begin{equation}
\aabs{p}_{\pol{m-1}{n+1}}
\leq
C(n,m)
\abs{\xi}^{-1}
\aabs{\mu_\xi p}_{\pol{m}{n+1}}
\end{equation}
for all $\xi\in\R^{n+1}\setminus 0$.
This leads to
\begin{equation}
\label{eq:torus-polynomial-estimate}
\aabs{p}_{\pol{m-1}{n+1}}
\leq
C(n,m)
\left(1+\abs{\xi}^2\right)^{-1/2}
\aabs{\mu_\xi p}_{\pol{m}{n+1}}
\end{equation}
for all $\xi\in\Z^{n+1}\setminus 0$.

We abbreviate $l=(j,k)$ and $u=(v,w)$.
We have
\begin{equation}
\hat f(l;u)=2\pi i(\mu_l\hat g)(l;u).
\end{equation}
Integrating this over $u\in S^n$ with fixed~$l$ and using~\eqref{eq:torus-polynomial-estimate} gives
\begin{equation}
\label{eq:torus-sphere-estimate}
\left(1+\abs{l}^2\right)
\int_{S^n}\abs{\hat g(l;u)}^2\der S(u)
\leq
C(n,m)
\int_{S^n}\abs{\hat f(l;u)}^2\der S(u)
\end{equation}
for all $l\in\Z^{n+1}$, provided that $\hat g(0;\dummy)=0$.
Changing the polynomial $\hat g(0;\dummy)$ has no effect on the regularity of~$\hat g$ or~$g$.

Summing the estimate~\eqref{eq:torus-sphere-estimate} over $l\in\Z^{n+1}$ proves~\eqref{eq:torus-sobolev-estimate}.
From this estimate it follows that~$\hat g$ is the Fourier series of $g\in H^{s+1}(M;\pol{m-1}{n+1})$.
The identity~\eqref{eq:torus-weak-gradient} is the simply the Fourier transform of the assumed identity~\eqref{eq:torus-fourier-gradient} and therefore holds true in the weak sense.
\end{proof}

\begin{remark}
Lemma~\ref{lma:torus-Hs-potential} gives a simpler proof of the regularity result used for tensor tomography on tori~\cite[Lemma~12]{I:torus}.
\end{remark}

\subsection{Orbits, traces, and translations}

Any element $b\in\T^n$ induces a natural translation operator $\tau_b\colon\T^n\to\T^n$ by $\tau_b(z)=z+b$.
The orbit of~$b$ is the set of all its integer multiples, $\Z b\subset\T^n$.
We may write the element in terms of its components $b=(b_1,\dots,b_n)\in(\T^1)^n$.

\begin{lemma}
\label{lma:torus-dense-orbit}
The orbit of $b\in\T^n$ is dense on~$\T^n$ if and only if the numbers $1,b_1,\dots,b_n$ are linearly independent over the rationals.
The points~$b$ for which the orbit is dense are dense on~$\T^n$.
\end{lemma}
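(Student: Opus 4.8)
The plan is to reduce both implications to a single dichotomy: \emph{the orbit $\Z b$ fails to be dense in $\T^n$ if and only if there is a nonzero $q\in\Z^n$ with $q\cdot b\in\Z$} (the value $q\cdot b$ being computed for any lift of $b$ to~$\R^n$, which does not affect whether it is an integer). I would first record that this condition is merely a restatement of linear dependence of $1,b_1,\dots,b_n$ over~$\mathbb{Q}$: a non-trivial rational relation $c_0+c_1b_1+\dots+c_nb_n=0$ must have $(c_1,\dots,c_n)\neq0$, since otherwise $c_0=0$ as well, and clearing denominators yields $q\in\Z^n\setminus0$ with $q\cdot b\in\Z$; conversely any such~$q$ gives the relation $q_1b_1+\dots+q_nb_n-m=0$ with $m=q\cdot b\in\Z$. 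So it suffices to prove the dichotomy.

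For the ``if'' part I would use characters: if $q\in\Z^n\setminus0$ and $q\cdot b\in\Z$, then $e_q\colon\T^n\to\T^1$ is non-constant, so $\ker e_q$ is a proper closed subgroup of~$\T^n$, while $e_q(kb)=1$ for all $k\in\Z$ gives $\Z b\subseteq\ker e_q$, hence $\overline{\Z b}\neq\T^n$. For the ``only if'' part, $H\coloneqq\overline{\Z b}$ is a closed subgroup of~$\T^n$; assuming $H\neq\T^n$, Pontryagin duality for~$\T^n$ furnishes a non-trivial character vanishing on~$H$, necessarily of the form~$e_q$ with $q\in\Z^n\setminus0$, and $e_q(b)=1$ then says $q\cdot b\in\Z$. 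If one prefers to avoid duality, the same conclusion follows from Weyl's equidistribution criterion: were there no such~$q$, then for each $q\in\Z^n\setminus0$ the sums $\sum_{k=1}^N e_q(kb)$ form a bounded geometric progression, so $N^{-1}\sum_{k=1}^N e_q(kb)\to0$, $(kb)_{k\ge1}$ equidistributes, and in particular is dense. This proves the first assertion.

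For the second assertion, by the dichotomy the set of $b$ whose orbit is \emph{not} dense equals $\bigcup_{q\in\Z^n\setminus0}\ker e_q$. Each $\ker e_q$ is a proper closed subgroup of the connected group~$\T^n$; an open subgroup would be clopen, hence all of~$\T^n$, so $\ker e_q$ has empty interior and is nowhere dense (equivalently, it is a closed null set). As $\T^n$ is a complete metric space, the Baire category theorem shows this countable union is not all of~$\T^n$, and in fact its complement --- the set of~$b$ with dense orbit --- is a dense~$G_\delta$. I expect the only genuine point of substance to be the ``only if'' step, where one must pass from ``$\overline{\Z b}$ is a proper closed subgroup'' to ``there is a non-trivial integer relation among $1,b_1,\dots,b_n$''; this is exactly where a small amount of harmonic analysis on~$\T^n$ (duality, or Weyl's criterion) enters, while everything else is routine bookkeeping.
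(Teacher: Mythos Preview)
Your argument is correct and follows essentially the same route as the paper: the paper invokes Weyl's equidistribution theorem to equate density of the orbit with the vanishing of $N^{-1}\sum_{l=1}^N e_k(lb)$ for all nonzero~$k$, then identifies this with rational linear independence of $1,b_1,\dots,b_n$, and dismisses the density of such~$b$ as an easy observation. Your write-up is more explicit---you spell out the geometric-sum computation behind Weyl's criterion, offer Pontryagin duality as an alternative for the ``only if'' direction, and give the full Baire category argument for the second assertion---but the core mechanism is the same.
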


\begin{proof}
Let us first point out that the conditions are independent of translating the numbers~$b_i$ by integers, so the statement is independent of the choice of representative from~$\R^n$ for $b\in\T^n=\R^n/\Z^n$.

It follows from Weyl's equidistribution theorem that the orbit is dense if and only if
\begin{equation}
\lim_{N\to\infty}\frac1N\sum_{l=1}^Ne_k(lb)=0
\end{equation}
for all $k\in\Z^n\setminus0$.
This can in turn be confirmed to be equivalent with the linear independence statement of the claim.

Density of such points is an easy observation.
\end{proof}

\begin{lemma}
\label{lma:torus-potential-xrt}
Suppose $g\in H^1(M;\pol{m-1}{n+1})$, fix $b\in\R^n$, and denote $G(a)\coloneqq I^m(\der g)(a,b)$.
Then $G\in H^{1/2}(\T^n)$ and $G(a)=g(0,a+b;1,b)-g(0,a;1,b)$ for almost all $a\in\T^n$.
In particular, $G=0$ if and only if $g(0,\dummy;1,b)$ is invariant under~$\tau_b$.
\end{lemma}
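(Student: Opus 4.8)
The plan is to compute $I^m(\der g)(a,b)$ directly from the formula for the X-ray transform and the characterization of~$\der g$ in coordinates. By lemma~\ref{lma:torus-Hs-potential} (applied with $s=0$, reading the hypothesis in reverse), the tensor field $f=\der g$ is given in the weak sense by $f(x,y;v,w)=(v\partial_x+w\cdot\nabla_y)g(x,y;v,w)$. Substituting into the definition $I^m f(a,b)=\int_0^1 f(t,a+bt;1,b)\der t$, the integrand becomes $(\partial_x g+b\cdot\nabla_y g)(t,a+bt;1,b)$, which is exactly $\frac{\der}{\der t}\big[g(t,a+bt;1,b)\big]$ by the chain rule. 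Hence the fundamental theorem of calculus gives $G(a)=g(1,a+b;1,b)-g(0,a;1,b)$ --- wait, one must be careful: the endpoint of the geodesic is $(1,a+b)$, so $G(a)=g(1,a+b;1,b)-g(0,a;1,b)$. But here we should use that in the theorem $g$ is only required to lie in $H^1(M;\pol{m-1}{n+1})$ with $M$ identified with $\T^{1+n}$, so $g(1,\dummy;v,w)=g(0,\dummy;v,w)$ as traces, and the stated formula $G(a)=g(0,a+b;1,b)-g(0,a;1,b)$ follows.

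The first technical point to address carefully is the regularity: since $g$ is only $H^1$ on the torus, the restriction $g(t,a+bt;1,b)$ to the line $t\mapsto(t,a+bt)$ and the traces $g(0,\dummy;1,b)$ need justification. I would handle this by the standard density argument: the identity $G(a)=g(0,a+b;1,b)-g(0,a;1,b)$ holds for smooth $g$ by the computation above, and both sides are continuous in $g$ with respect to the $H^1(M)$ norm --- the right-hand side because the trace map $H^1(\T^{1+n})\to H^{1/2}(\T^n)$, $g\mapsto g(0,\dummy)$ is bounded (applied coefficientwise to the finitely many polynomial coefficients) and $\tau_b$ is an isometry of $H^{1/2}(\T^n)$, and the left-hand side because $I^m(\der\dummy)(\dummy,b)$ is bounded from $H^1(M;\pol{m-1}{n+1})$ into $H^{1/2}(\T^n)$. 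The latter boundedness, together with the claim $G\in H^{1/2}(\T^n)$, follows immediately from the closed-form expression for $G$ once it is established, so I would prove the formula first on a dense smooth subspace and then pass to the limit, obtaining both the formula and the regularity statement simultaneously.

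The last assertion is then purely formal: $G=0$ almost everywhere if and only if $g(0,a+b;1,b)=g(0,a;1,b)$ for almost every $a$, i.e. the function $a\mapsto g(0,a;1,b)$ on~$\T^n$ is invariant under the translation~$\tau_b$. I expect the main obstacle to be bookkeeping around the identification of $[0,1]$ with~$\T^1$: one must use precisely the periodic $H^1(M)=H^1(\T^{1+n})$ structure announced after theorem~\ref{thm:torus} to make sense of ``$g(0,\dummy)=g(1,\dummy)$'', since for a genuine $H^1([0,1]\times\T^n)$ function the two traces would be unrelated. Everything else is a one-line chain-rule computation plus a routine density-and-trace argument.
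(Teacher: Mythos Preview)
Your proposal is correct. The route differs from the paper's: you compute pointwise along the geodesic (for smooth $g$) via the chain rule and the fundamental theorem of calculus, and then pass to general $g\in H^1(\T^{1+n};\pol{m-1}{n+1})$ by density, using that the trace map $H^1(\T^{1+n})\to H^{1/2}(\T^n)$ is bounded and that $g\mapsto I^m(\der g)(\cdot,b)$ is continuous $H^1\to L^2(\T^n)$ (this weaker continuity suffices to identify the limits; the $H^{1/2}$ regularity of $G$ then comes for free from the right-hand side). The paper instead works directly in the weak formulation~\eqref{eq:weak-torus-def}: it pairs $G$ with a test function $\psi\in C^\infty(\T^n)$, observes that $D_bE_b\psi=0$ where $D_b=\partial_x+b\cdot\nabla_y$, and integrates by parts in the variables $(x,y)$ on $M$. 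This produces the boundary terms $g(1,\cdot;1,b)$ and $g(0,\cdot;1,b)$ without ever restricting $g$ to a one-dimensional geodesic, so no approximation step is needed. Your approach is more elementary and makes the geometric picture (integrating a derivative along the ray) transparent; the paper's approach is slightly slicker because the weak formulation was set up precisely so that integration by parts at the level of $H^1$ functions is immediate. Your handling of the identification $[0,1]\simeq\T^1$ and the equality of traces at $x=0$ and $x=1$ is exactly the point the paper also invokes.
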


\begin{proof}
Let us take any $\psi\in C^\infty(\T^n)$ and write $D_b\coloneqq\partial_x+b\cdot\nabla_y$.
This differential operator satisfies $D_bE_b\psi=0$.

Since~$g$ is $H^1$-regular as a function on~$\T^{n+1}$, its restrictions to the subtori $\{0\}\times\T^n$ and $\{1\}\times\T^n$ coincide and are $H^{1/2}$-regular.

Using the weak formulation of the X-ray transform given in~\eqref{eq:weak-torus-def}, we find
\begin{equation}
\begin{split}
\ip{G}{\psi}_{\T^n}
&=
\ip{I^m(\der g)(\dummy,b)}{\psi}_{\T^n}
\\&=
\int_M D_bg(x,y;1,b)E_b\psi(x,y)\der x\der y
\\&=
\int_{\T^n} g(1,y;1,b)\psi(y-b)\der y
-
\int_{\T^n} g(0,y;1,b)\psi(y)\der y
\\&=
\ip{\tau_bg(0,\dummy;1,b)-g(0,\dummy;1,b)}{\psi}_{\T^n}
.
\end{split}
\end{equation}
Since this holds for all $\psi\in C^\infty$, we have indeed $G(a)=g(0,a+b;1,b)-g(0,a;1,b)$ for almost all $a\in\T^n$.
\end{proof}

\subsection{Proof of theorem~\ref{thm:torus}}
\label{sec:torus-pf}

We are now ready to prove theorem~\ref{thm:torus} using the lemmas presented above.

\begin{proof}[Proof of theorem~\ref{thm:torus}]
It is clear that the second condition implies the first one; filling in the details in our formalism is an easy exercise.
Let us prove the converse.
Suppose that $f\in L^2(M;\pol{m}{n+1})$ satisfies $I^mf=0$.

We will again use the shorthands $l=(j,k)$ and $u=(v,w)$ when convenient.

For any $k\in\Z^n$ and $b\in\R^n$ we have by lemma~\ref{lma:torus-fourier}
\begin{equation}
\sum_{j'\in\Z}\hat f(j',k;1,b)\phi(j'+k\cdot b)
=
0.
\end{equation}
The sum is simple for $b\in\Z^n$, since $\phi(0)=1$ and~$\phi$ vanishes at other integers.

First, setting $k=0$ gives $\hat f(0,0;1,b)=0$ for all $b\in\R^n$.
Since~$\hat f(0;u)$ is homogeneous in~$u$, it therefore has to vanish for all $u\in\R^{n+1}$.

We also see that there is no information about~$f(j,0;u)$ for $j\neq0$.
We define $h\in L^2([0,1];\pol{m}{n+1})$ so that $\hat h(0;u)=0$ and $\hat h(j;u)=f(j,0;u)$ for all $j\neq0$.

Suppose then that $k\neq0$.
If $j+k\cdot b=0$, we get $\hat f(j,k;1,b)=0$.
Since $\hat f(l;u)$ is homogeneous in~$u$, this means that $\hat f(l;u)=0$ whenever $l\cdot u=0$ and $v\neq0$.

Let us denote $H_l=\{u\in\R^{n+1};l\cdot u=0\}$.
We thus know that the polynomial function $u\mapsto\hat f(l;u)$ vanishes whenever $u\in H_l\setminus H_{(1,0)}$.
Now~$l$ is not parallel to~$(1,0)$ since $k\neq0$, so the closure of $H_l\setminus H_{(1,0)}$ is the hyperplane~$H_l$.
Thus the polynomial vanishes in all of~$H_l$.

We have thus found that $\hat f(l;u)=0$ whenever $l\cdot u=0$ and $k\neq0$.
We then apply lemma~\ref{lma:polynomial-factor} to each~$l$ with $k\neq0$ to produce a polynomial~$\hat g(l;\dummy)$ with $\hat f(l;u)=2\pi i (l\cdot u)\hat g(l;u)$ for all $u\in\R^{n+1}$.
For other values of~$l$ we set $\hat g(l;\dummy)=0$.

Thus we have found a function $\hat g\colon\Z\times\Z^n\to\pol{m-1}{n+1}$ so that $\hat g(j,0;\dummy)=0$ for all $j\in\Z$ and $\hat f(j,k;v,w)=2\pi i(jv+k\cdot w)\hat g(j,k;v,w)$ whenever $k\neq0$.
If we denote $F=f-\pi^*h$, we may apply lemma~\ref{lma:torus-Hs-potential} to~$\hat F$, $\hat g$, and $s=0$.
We conclude that~$\hat g$ is the Fourier series of $g\in H^1(M;\pol{m-1}{n+1})$ which satisfies $F=\der g$.

Recall that~$g$ is an~$H^1$ function $\T^{n+1}\to\pol{m-1}{n+1}$ when~$[0,1]$ is identified with~$\T^1$, so the values of~$g$ at $x=0$ and $x=1$ coincide in the Sobolev sense.

Since $I^m(\pi^*h)=0$, it remains to show that $I^m(\der g)=0$ leads to $g\in H^1_0(M;\pol{m-1}{n+1})$.
To this end, it suffices to show that the function $G(y,b)\coloneqq g(0,y;1,b)$ is independent of $y\in\T^n$ for every $b\in\R^n$.
Because $\hat g(j,0;\dummy)=0$ for all $j\in\Z$, the only possible constant value is zero.

By lemma~\ref{lma:torus-potential-xrt} the function~$G(\dummy,b)$ is in~$H^{1/2}(\T^n)$ and is invariant under the action of~$\tau_b$.
Suppose $b\in(0,1)^n$ is such that the numbers $1,b_1,\dots,b_n$ are linearly independent over the rationals.
Then the orbit of~$b$ is dense in~$\T^n$ by lemma~\ref{lma:torus-dense-orbit} and~$G(\dummy,b)$ is invariant under the action of a dense subset of the whole translation group of~$\T^n$.
For any fixed $u\in L^2(\T^n)$ the map $\T^n\ni b\mapsto\tau_bu\in L^2(\T^n)$ is continuous.
This implies that~$G(\dummy,b)$ must in fact be constant.

The partial functions $b\mapsto G(y,b)$ for fixed~$y$ are polynomials of order $m-1$ but not necessarily homogeneous.
Therefore the partial functions~$G(\dummy,b)$ for a dense set of parameters~$b$ (see lemma~\ref{lma:torus-dense-orbit} for the density of parameters) fully determine the full function~$G$.
This implies that~$G(\dummy,b)$ is in fact a constant function for every $b\in\R^n$.

As explained above, this shows $g\in H^1_0$, and we have found the functions~$h$ and~$g$ we need.
Estimating~$h$ by~$f$ in~$L^2$ is trivial, and the estimate for~$g$ follows from~\eqref{eq:torus-sobolev-estimate}.
\end{proof}

\section{Tensor tomography on M\"obius strips and other twisted slabs}
\label{sec:twisted}

It is fairly straightforward to generalize our results from the strip $[0,1]\times\T^1$ to the M\"obius strip, and the same method works for various twisted versions of $[0,1]\times\T^n$.
We begin by setting up the necessary machinery.

\subsection{Covering spaces and deck transformations}

Let us begin with defining the class of manifolds we are working with.

\begin{definition}
Let $M=[0,1]\times\T^n$ and equip it with the usual Euclidean metric~$e$.
A twisted slab of dimension $1+n$ is a Riemannian manifold $(N,g_N)$ for which there is a smooth covering map $p\colon M\to N$ so that $p^*g_N=e$.
\end{definition}

The pullbacks over~$p$ are defined in the usual way.
Pushforwards are defined as deck averages as we will explain next.

The deck transformation group~$\Aut(p)$ is the group of diffeomorphisms $\phi\colon M\to M$ for which $p\circ\phi=p$.
This is a finite group and acts freely on~$M$.
For a scalar function $f\colon M\to\R$ the pushforward $p_*f\colon N\to\R$ is defined as
\begin{equation}
p_*f(x)
=
\frac1{\abs{\Aut(p)}}\sum_{y\in p^{-1}(x)}f(y).
\end{equation}
This definition can be extended to tensor fields and other objects using local diffeomorphisms given by restrictions of~$p$ to small open sets.

The pushforward~$p_*$ is a left inverse of the pullback~$p^*$.
There is no right inverse as~$p^*$ is typically not surjective.

The simplest non-trivial example of a twisted slab is the M\"obius strip, which has a two-fold cover by the usual strip $[0,1]\times\T^1$.
In higher dimensions there are more exotic ways to glue together faces of $[0,1]\times[0,1]^n$ to produce twisted slabs.
A twisted slab is necessarily compact.


The twisted slabs can also be stretched; see remark~\ref{rmk:stretch}.

\subsection{Tensor tomography in twisted slabs}

Now we are ready to state and prove our tensor tomography result for twisted slabs.
In dimension one the only twisted slab is~$[0,1]$, and the theorem adds nothing new to theorem~\ref{thm:torus}.
Twisted slabs exist in all higher dimensions.

\begin{theorem}
\label{thm:twisted}
Let~$N$ be a twisted slab of dimension two or higher, and fix any integer $m\geq0$.
Let~$f$ be an $L^2$-regular tensor field on~$N$.
The following are equivalent:
\begin{enumerate}
	\item The X-ray transform of~$f$ vanishes: $I^mf=0$.
	\item
There is $h\in L^2([0,1],\pol{m}{n+1})$ and a tensor field~$g$ on~$N$ of order $m-1$ with coefficients in~$H^1_0$ so that $\int_0^1h(x;v,w)\der x=0$ for all $(v,w)\in\R^{n+1}$ and $f=p_*\pi_M^*h+\der g$, where $\pi_M\colon M\to[0,1]$ is the projection to the first component.
\end{enumerate}
In addition, there is an estimate like~\eqref{eq:torus-thm-estimate} in theorem~\ref{thm:torus}.
\end{theorem}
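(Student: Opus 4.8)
The plan is to reduce Theorem~\ref{thm:twisted} to Theorem~\ref{thm:torus} by lifting the whole problem to the covering slab $M=[0,1]\times\T^n$ via the covering map $p\colon M\to N$, exploiting the fact that $p^*g_N=e$ is an isometry and that the deck group $\Aut(p)$ is finite and acts freely.

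First I would set up the correspondence between X-ray transforms. Since $p$ is a local isometry, every geodesic of $N$ joining boundary points lifts to a geodesic of $M$ joining boundary points, and conversely $p$ maps geodesics of $M$ to geodesics of $N$; moreover the boundary $\partial N$ is covered by $\partial M=\{0,1\}\times\T^n$ because the twisting only glues faces in the $\T^n$-directions. Hence $I^m(p^*f)=0$ on $M$ if and only if $I^mf=0$ on $N$: integrating $p^*f$ over a lifted geodesic equals integrating $f$ over its projection, and every $N$-geodesic arises this way. I would record this as a short lemma. Also, $p^*$ is an isometric embedding $L^2(N;\pol{m}{n+1})\hookrightarrow L^2(M;\pol{m}{n+1})$ up to the constant $|\Aut(p)|^{1/2}$, with left inverse $p_*$, and both $p^*$ and $p_*$ commute with the symmetrized covariant derivative $\der$ (because $p$ is a local isometry and $p_*$ is an average of pullbacks by local isometries); $p^*$ and $p_*$ also preserve the Sobolev classes $H^1$ and $H^1_0$.

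Next, assume $I^mf=0$ on $N$. Then $\tilde f\coloneqq p^*f\in L^2(M;\pol{m}{n+1})$ satisfies $I^m\tilde f=0$, so Theorem~\ref{thm:torus} gives $\tilde h\in L^2([0,1];\pol{m}{n+1})$ with $\int_0^1\tilde h(x;v,w)\der x=0$ and $\tilde g\in H^1_0(M;\pol{m-1}{n+1})$ with $\tilde f=\pi_M^*\tilde h+\der\tilde g$, together with the norm estimate~\eqref{eq:torus-thm-estimate}. The function $\tilde f$ is $\Aut(p)$-invariant (it is a pullback), so averaging the decomposition over the deck group keeps $\tilde f$ fixed; the key point is that $\pi_M^*\tilde h$ depends only on the $[0,1]$-coordinate, and the deck transformations, acting on $M=[0,1]\times\T^n$ as isometries covering $N$, must preserve the $[0,1]$-fibration (the first coordinate is intrinsic: it is the distance to $\partial M$, or equivalently $p$ is the quotient by a group acting only on the $\T^n$-factor, possibly combined with the orientation-reversing involution of $[0,1]$ in cases like the M\"obius strip — here one has to be slightly careful and either note that such involutions still map $\pi_M^*\tilde h$ to a function of $x$ alone, or absorb the difference). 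Hence averaging $\tilde h$ over the (induced action of the) deck group yields an $h_0\in L^2([0,1];\pol{m}{n+1})$ with mean zero and $\pi_M^*h_0$ the deck average of $\pi_M^*\tilde h$; and the deck average $g_0$ of $\tilde g$ lies in $H^1_0(M;\pol{m-1}{n+1})$ with $\tilde f=\pi_M^*h_0+\der g_0$ and $h_0,g_0$ now $\Aut(p)$-invariant. Then set $h\coloneqq h_0$ (viewed as the same object on the $[0,1]$-factor; note $p_*\pi_M^*h_0$ differs from a function-of-$x$ only up to the same deck bookkeeping, and one checks $p_*\pi_M^*h = $ the claimed term) and $g\coloneqq p_*\tilde g = p_*g_0$, a tensor field on $N$ of order $m-1$ with $H^1_0$ coefficients. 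Applying $p_*$ to $\tilde f=p^*f=\pi_M^*h_0+\der g_0$ and using $p_*p^*=\mathrm{id}$, $p_*\der=\der p_*$ gives $f=p_*\pi_M^*h+\der g$. The estimate follows from~\eqref{eq:torus-thm-estimate} and the boundedness of $p^*,p_*$ on the relevant $L^2$ and $H^1_0$ spaces, with a new constant depending on $N$ (equivalently on $n$, $m$, and $|\Aut(p)|$). The converse implication — that any $f$ of the stated form has $I^mf=0$ — is immediate: $I^m(\der g)=0$ by the boundary-vanishing of $g$ exactly as in Theorem~\ref{thm:torus} (pull back to $M$ and use Lemma~\ref{lma:torus-potential-xrt}), and $I^m(p_*\pi_M^*h)=0$ because its pullback $\sum_{\phi}\phi^*\pi_M^*h$ is, fibrewise in $x$, a function with zero $x$-integral, hence in the kernel on $M$, hence in the kernel on $N$.

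The main obstacle I anticipate is the bookkeeping around how the deck group interacts with the $[0,1]$-fibration and with the term $p_*\pi_M^*h$: one must verify that $\pi_M$ is genuinely equivariant (or equivariant up to the harmless first-coordinate flip $x\mapsto 1-x$), so that "functions of $x$ alone on $M$" is a deck-stable class and descends to the claimed class on $N$, and that $p_*\pi_M^*h$ really is the right object to write in statement~(2). Everything else — the geodesic correspondence, the commutation of $p^*,p_*$ with $\der$, and the norm bounds — is routine once the covering-space setup is in place.
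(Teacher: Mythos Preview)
Your proposal is correct and follows the same route as the paper: lift to $M$ via $p^*$, apply Theorem~\ref{thm:torus}, and push forward via $p_*$. The paper's proof is considerably shorter, however: it skips the deck-averaging step entirely and simply applies $p_*$ to the decomposition $p^*f=\pi_M^*h+\der\tilde g$ coming from Theorem~\ref{thm:torus}, setting $g=p_*\tilde g$ with no modification to $h$. Because the statement of Theorem~\ref{thm:twisted} is already phrased in terms of $p_*\pi_M^*h$ rather than a function on $N$ depending on some intrinsic height coordinate, there is no need for $h$ (or $\pi_M^*h$) to be deck-invariant, and your anticipated obstacle about how $\Aut(p)$ interacts with the $[0,1]$-fibration never actually arises. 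Your extra averaging is harmless but unnecessary.
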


\begin{remark}
If~$N$ has only one boundary component, the term~$p_*\pi_M^*h$ is symmetric under the flip across the ``equator'' $p(\{\frac12\}\times\T^n)\subset N$.
Therefore it would suffice to define~$h$ on~$[0,\frac12]$ instead of~$[0,1]$.
We prefer the present formulation, which works whether the number boundary components is one or two.

For a simple example of what the kernel looks like in the case of one boundary component, consider scalar tomography on the M\"obius strip ($m=0$, $n=1$).
The M\"obius strip can be identified with~$[0,1]^2$, where $\{0\}\times[0,1]$ and $\{1\}\times[0,1]$ are identified with opposite orientations.
(Or in fact $[0,\frac12]\times[0,1]$, but stretching is irrelevant; see remark~\ref{rmk:stretch}.)
Functions in the kernel are of the form $f(x,y)=h(x)$, where $\int_0^{1/2}h(x)\der x=0$ and $h(x)=h(1-x)$.
There is more symmetry than for the usual strip due to the orientation-flipping gluing.
\end{remark}

\begin{proof}[Proof of theorem~\ref{thm:twisted}]
First, it is easy to observe that the second condition implies the first.
Also, the estimate follows from theorem~\ref{thm:torus}.

For clarity, let us indicate the underlying manifold of the ray transforms by a subscript.
Suppose the tensor field~$f$ on~$N$ satisfies $I^m_Nf=0$.
Then also $I^m_M(p^*f)=0$.
Using theorem~\ref{thm:torus}, we find~$h$ and~$\tilde g$ so that $p^*f=\pi^*_Mh+\der\tilde g$.
Applying~$p_*$ we find that
\begin{equation}
f
=
p_*\pi^*_Mh+\der p_*\tilde g.
\end{equation}
We let $g=p_*\tilde g$, making the potential~$g$ on~$N$ the deck average of the potential~$\tilde g$ of the pullback~$p^*f$.
\end{proof}

\subsection*{Acknowledgements}

J.I.\ was supported by the Academy of Finland (decision 295853), and he is grateful for hospitality and support offered by the University of Washington during visits.
G.U.\ was partly supported by NSF.

\bibliographystyle{plain}
\bibliography{ip}

\end{document}